\newcommand{\cd}{k}
\newcommand{\Img}{\mathrm{Im}}
\newcommand{\nint}{\int_{\RN}}
\newcommand{\no}[1]{\|#1\|}
\newcommand{\R}{\mathbb{R}}
\newcommand{\C}{\mathbb{C}}
\newcommand{\ra}{\rightarrow}
\newtheorem*{lemma*}{Lemma}
\newtheorem*{theorem*}{Theorem}
\newtheorem{prop}{Proposition}
\newtheorem*{prop*}{Proposition}
\newtheorem*{cor*}{Corollary}
\theoremstyle{definition}
\newtheorem{defn*}{Definition}
\theoremstyle{remark}
\newcommand{\mb}[1]{\mathbb{#1}}
\newcommand{\var}{\varepsilon}
\newcommand{\p}{\partial}
\newcommand{\om}{\omega}
\newcommand{\sg}{\sigma}
\newcommand{\N}{n}
\newcommand{\RN}{\R\sp\N}
\newcommand{\F}{F}
\newcommand{\kk}{j}
\newcommand{\pt}{\partial}
\newcommand{\D}{D}
\renewcommand{\om}{\mathbf{\omega}}
\newcommand{\M}{M}
\newcommand{\CC}{C}
\newcommand{\G}{G}
\newcommand{\HH}{H}
\newcommand{\EH}{\mathbf{E}}
\newcommand{\CH}{\mathbf{C}}
\newcommand{\z}{z}
\renewcommand{\sg}{\sigma}
\theoremstyle{theorem}
\begin{document}
\title[Solutions with a small energy/charge ratio]%
{Existence of positive solutions to a semi-linear elliptic 
system with a small energy/charge ratio}
\author{Garrisi Daniele}
\address{Math Sci. Bldg Room \# 302, POSTECH, Hyoja-Dong, Nam-Gu, Pohang, 
Gyeongbuk, 790-784, Republic of Korea}
\curraddr{}
\email{garrisi@postech.ac.kr}
\thanks{This work was supported by Priority 
Research Centers Program through the National Research Foundation of Korea 
(NRF) funded by the Ministry of Education, Science and Technology 
(Grant 2010-0029638).}
\keywords{elliptic system, unbounded, standing waves, nemytski, soliton}
\date{\today}
%\dedicatory{To my parents and my sister}
\begin{abstract}
We prove the existence of positive solutions to a system of $ \cd $ 
non-linear elliptic equations corresponding to standing-wave $ k $-uples
solutions to a system of non-linear Klein-Gordon equations. 
Our solutions are characterised by a small energy/charge ratio,
appropriately defined.
\end{abstract}
\subjclass{35A15, 35J50, 37K40}
\maketitle
%\tableofcontents
\section*{Introduction}
\thispagestyle{empty}
\noindent
Given the real numbers $ 0 < m_1\leq m_2\leq\dots\leq m_k $, 
we show the existence of solutions to the non-linear elliptic system 
\[
\label{eq:E}
\tag{E}
\begin{array}{c}
-\Delta u_j + (m_j \sp 2 - \om_j\sp 2) u_j
+ \pt_{z_j} \G(u) = 0,\quad 1\leq j\leq k\\\\
u_j > 0,\quad u_j\in H\sp 1 _r (\RN)
\end{array}
\]
which are critical points of the energy functional 
\begin{gather*}
E\colon\HH\times\Sigma\ra\R,\\
(u,\om)\mapsto \frac{1}{2}\sum_{j = 1}\sp k
\nint |\D u_j|\sp 2 + (m_j\sp 2 + \om_j\sp 2) u_j\sp 2 + 2k\sp {-1}
\G(u)
\end{gather*}
on the constraint
\begin{gather*}
\M_\sg := \{(u,\om)\in\HH\times\Sigma\,|\,\CC_j (u,\om) = \sg_j\}\\
\CC_j (u,\om) = \om_j \nint u_j\sp 2
\end{gather*}
for some $ \sg\in (0,+\infty)\sp k $. We used the notation
\[
\HH := H\sp 1 (\RN,\R\sp k),\quad\Sigma := [0,+\infty)\sp k.
\]
We also define
\[
H_r := H\sp 1 _r (\RN,\R\sp k),\quad M_\sg \sp r := M_\sg\cap H_r
\]
where, by definition, $ u\in H\sp 1 _r (\RN,\R\sp k ) $ if
$ u\in H\sp 1 (\RN,\R\sp k) $ and
\[
u\sp j (x) = u\sp j (y)\text{ if } |x| = |y|,\text{ a.e. }
\]
for every $ 1\leq j\leq k $. On the Hilbert spaces $ H $ and
$ H_r $, we consider the norm induced by the scalar product
\[
(u,v)_H := \sum_{j = 1} \sp k (u_j,v_j)_{H\sp 1}.
\]

Solutions to \eqref{eq:E} with the variational characterisation
above are interesting by several means: critical points of
$ E $ over $ \M_\sg $ correspond to standing-wave $ k $-uples solutions
to the system of non-linear Klein-Gordon equations
\[
\label{eq:k-NLKG}
\tag{$ k $-NLKG}
\pt_{tt} u_j - \Delta_x u_j + m_j\sp 2 u_j + \p_{z_j} \G(u) = 0,\quad
1\leq j\leq k
\]
through the map
\begin{equation}
\label{eq:sw}
(u,\om)\mapsto (e\sp{-i\om_1 t} u_1 (x),\dots,e\sp{-i\om_k t} u_k (x)).
\end{equation}
Secondly, if we denote $ H\sp 1 (\RN,\C\sp k)\times L\sp 2 (\RN,\C\sp k) $
by $ X $, on solutions to \eqref{eq:k-NLKG} the quantities
\begin{gather*}
\label{eq:energy}
\tag{Energy}
\EH\colon X\ra\R,\\ 
(\phi,\phi_t)\mapsto 
\frac{1}{2}\nint |\D\phi|\sp 2 + |\phi_t|\sp 2 + \frac{1}{2}\nint
\sum_{j = 1}\sp k \big(m_j\sp 2 \phi_j\sp 2 + 2k\sp{-1}\G(\phi)\big)
\end{gather*}
\begin{gather*}
\label{eq:charges}
\tag{Charges}
\CH_j \colon X\ra\R,\\
(\phi,\phi_t)\mapsto -\Img\nint \overline{\phi}_j \phi_t\sp j.
\end{gather*}
are constant (under the assumption $ \G(u) = \G(|u_1|,\dots,|u_k|) $)
and
\begin{align*}
E(u,\om) &= \EH(u_1,\dots,u_k,-i\om_1 u_1,\dots,-i\om_k u_k)\\
C(u,\om) &= \CH(u_1,\dots,u_k,-i\om_1 u_1,\dots,-i\om_k u_k).
\end{align*}
Such equalities turned out to be crucial to prove the orbital stability
of standing-wave solutions to the scalar NLKG in \cite{BBBM10}, and to 
a coupled NLKG in \cite{Gar11}. Finally, according to \cite{BBBS09},
solutions $ v $ to the scalar NLKG with initial datum $ \Phi\in X $ 
such that the energy/charge ratio
\[
\Lambda(\Phi) := \frac{\EH(\Phi)}{m\CH(\Phi)} < 1
\]
have a non-dispersive property. We do not address in this work
the orbital stability or dispersion. 
\vskip .5em
\noindent We use the notation
\[
m := m_1,\quad H_r \sp * := H_r\setminus 0,\quad\Sigma_* := \Sigma\setminus 0
\]
and assume that $ \G $ is continuously differentiable and
\[
\label{eq:A0}
\tag{$ A_0 $}
\G(\z) = \G(|\z_1|,\dots,|\z_k|); 
\]
\[
\label{eq:A1}
\tag{$ A_1 $} 
\F(\z) := \G(\z) + \frac{1}{2}\sum_{j = 1}\sp k m_j\sp 2 \z_j\sp 2\geq 0,\quad
\G(0) = 0;
\]
\[
\label{eq:A2}
\tag{$ A_2 $}
|\D\G(\z)|\leq c(|\z|\sp{p - 1} + |\z|\sp{q - 1}),\quad 
2 < p \leq q < \frac{2\N}{\N - 2};
\]
\[
\label{eq:A3}
\tag{$ A_3 $}
\alpha :=  \inf_{\z\in\Sigma_*} \frac{\F(\z)}{|\z|\sp 2} < \frac{m\sp 2}{2};
\]
for every $ 1\leq j \leq k $ 
%there exists $ \delta_j > 0 $ such that,
%for every sequence $ (\z_n) $ with
%\[
%(\z_n \sp j)\sp 2 = o\Big(\sum_{i\neq j} (\z_n \sp i)\sp 2\Big)
%\]
\[
\label{eq:A4}
\tag{$ A_4 $}
\alpha_j := 
\inf_{\scalebox{0.57}{$ \displaystyle\sum_{h\neq j} \z_h \sp 2\neq 0 $}}
\frac{\F(\z)}{\sum_{h\neq j} \z_h \sp 2} > \alpha.
\]
Under the assumptions above, we can prove the following
\begin{theorem*}[Main]
\hypertarget{thm:main}{}
There exists an open subset $ \Omega\subset (0,+\infty)\sp k $ 
such that the infimum of $ E $ is achieved on $ \M_\sg \sp r $ for every
$ \sg\in\Omega $.
\end{theorem*}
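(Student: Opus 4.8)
The plan is to read this as a constrained minimisation and to identify $\Om$ as the set of charges for which the infimum lies strictly below the threshold at which compactness can be lost. Set $I(\sg):=\inf_{\M_\sg^r}E$. On $\M_\sg^r$ the constraint $\CC_j(u,\om)=\om_j\nint u_j^2=\sg_j$ forces $u_j\neq 0$ and $\om_j=\sg_j/\nint u_j^2$, so I would first eliminate $\om$ and view $E$ as a functional of $u$ alone, the frequency contribution turning into the singular term $\tfrac12\sum_{j=1}^k \sg_j^2/\nint u_j^2$. Assumption \eqref{eq:A1} ($\F\geq 0$) bounds $E$ from below, and together with this singular term, which blows up as any $\nint u_j^2\to 0$, yields coercivity of the reduced functional on $\M_\sg^r$; hence any minimising sequence $(u^{(n)},\om^{(n)})$ is bounded in $\HH\times\Sigma$ and keeps each $\nint (u_j^{(n)})^2$ bounded away from $0$.

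I would then extract a subsequence with $u^{(n)}\wk u$ in $H_r$ and $\om^{(n)}\ra\om$. Radial symmetry is essential here: by the compact embedding $H_r\hookrightarrow L^p(\RN,\R^k)$ valid for $2<p<\tfrac{2\N}{\N-2}$, the growth bound \eqref{eq:A2} gives $\nint\G(u^{(n)})\ra\nint\G(u)$, so the nonlinear part of $E$ passes to the limit, while the gradient and $L^2$ terms are weakly lower semicontinuous. Were no $L^2$-mass lost, $u$ would lie in $\M_\sg^r$ with $E(u,\om)\leq I(\sg)$, forcing equality and finishing the proof.

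The main obstacle is exactly this possible loss of $L^2$-mass: the charge $\nint u_j^2$ is an $L^2$ quantity, and radial compactness degenerates at the endpoint $p=2$, so a priori part of the charge may escape to spatial infinity and $u$ need not meet the constraint. To exclude this I would run a concentration--compactness dichotomy and show that splitting is energetically forbidden once $I(\sg)$ sits strictly below the rest-energy threshold, i.e. once the energy/charge ratio $\Lambda<1$; concretely I set $\Om:=\{\sg\in(0,+\infty)^k\mid I(\sg)<m\sum_{j=1}^k\sg_j\}$. Assumption \eqref{eq:A3}, $\alpha<m^2/2$, is what makes the threshold beatable: a rescaled radial profile adapted to a near-minimiser of $\F(\z)/|\z|^2$ produces, for small $\sg$, configurations with $\Lambda<1$, so $\Om\neq\emptyset$, while continuity of $I$ in $\sg$ makes $\Om$ open; assumption \eqref{eq:A4} forces every component to be activated, so each $\om_j$ stays positive in the limit. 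For $\sg\in\Om$ the dichotomy and vanishing alternatives are ruled out, the minimising sequence converges strongly in $L^2$, the constraint is preserved, and $u$ attains the infimum.
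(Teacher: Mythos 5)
Your overall framing (coercivity, radial compactness for the superquadratic part, and the identification of $\Omega$ as a sublevel set of the energy/charge ratio, made nonempty by $(A_3)$ via a rescaled plateau profile) matches the paper's, but the two steps you delegate to concentration--compactness are exactly where the real work lies, and your proposed mechanism for them does not go through. The exclusion of dichotomy and vanishing by the condition $I(\sigma)<m\sum_j\sigma_j$ would require the strict subadditivity $I(\sigma)<I(\sigma')+I(\sigma-\sigma')$, which is not available: the paper explicitly relegates it to the closing remarks and observes that it would need the additional hypotheses $(A_5)$ and $(A_6)$ on rearrangements and translated components. The paper's actual route avoids concentration--compactness altogether. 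By Ekeland's principle the minimising sequence may be taken Palais--Smale; testing the difference of the Euler--Lagrange relations at indices $n,m$ against $u_n-u_m$ produces the quadratic form $\sum_j\|\D u_n^j-\D u_m^j\|_{L^2}^2+(m_j^2-\omega_j^2+o(1))\|u_n^j-u_m^j\|_{L^2}^2$, bounded by quantities that vanish thanks to the compact embedding $H^1_r\hookrightarrow L^p\cap L^q$ for $2<p\le q<2N/(N-2)$. The missing $L^2$-compactness is thus recovered from the equation itself, but only under the condition $\omega_j<m_j$ for every $j$.

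Your threshold is too weak to deliver that condition. In the scalar case one has $\Lambda(u,\omega)\ge(\alpha+\omega^2/2)/\omega$, so $\Lambda<m$ only confines $\omega$ to the interval $(m-\sqrt{m^2-2\alpha},\,m+\sqrt{m^2-2\alpha})$, which extends \emph{above} $m$; in the vector case $\Lambda<m$ gives no control on the individual $\omega_j$ at all, since a single component's charge could carry almost all the weight in the denominator. This is precisely the role of $(A_4)$, which you have misread: it is not about keeping each $\omega_j$ positive, but about bounding each mass fraction $B_j(u)=b_j(u)/\sum_h b_h(u)$ away from $0$ and $1$ along sequences with $\Lambda<\sqrt{2\alpha}+\eta$; this is what the Lemma ``Coercive'' uses to force every $\omega_j$ into an $O(\sqrt{\eta})$-neighbourhood of $\sqrt{2\alpha}<m\le m_j$. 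Accordingly the paper works with the much lower threshold $\Lambda<\sqrt{2\alpha}+\eta$, not $\Lambda<m$. Without these two ingredients --- the Palais--Smale/Ekeland mechanism restoring $L^2$-compactness, and the componentwise frequency control extracted from $(A_4)$ --- your argument has a genuine gap at the decisive step.
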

The technique we use is similar to the one adopted in \cite{BF09} in
the scalar case $ k = 1 $. Therein it is showed that if a minimising 
sequence $ (u_n,\om_n) $ for $ E $ over $ M_\sg\sp r  $ is such that 
$ \om_n \ra \om < m $, then a subsequence of $ (u_n) $ converges
on $ H\sp 1 $. The existence of such sequences is provided by the
inequality
\begin{equation}
\label{eq:lambda}
\inf_{H_r \sp * \times\Sigma_*} \Lambda < 
\inf_{H_r \sp * \times\Sigma_* \sp m} \Lambda
\end{equation}
where 
\[
\Lambda(u,\om) := \frac{E(u,\om)}{C(u,\om)}
\]
and 
\[
\Sigma_* \sp m = \Sigma_* \cap\{z\geq m\}.
\]
In higher dimension, $ \Sigma_* \sp m $ should be replaced by
\[
\Sigma_* \sp{\mathbf{m}} := \bigcup_{j = 1} \sp k \Sigma_* \sp{m_j}
\]
where
\[
\Sigma_* \sp{m_j} = \{\z\in\Sigma_*\,|\,\z_j\geq m_j\}.
\]
A direct attempt to prove the inequality \eqref{eq:lambda} lead to minimise 
$ \Lambda(u,\cdot) $ over the set $ \Sigma_* \sp{\mathbf{m}} $,
whose boundary consists of $ 3\sp k - 1 $ pieces each of them 
leading to a different condition on the non-linear term
$ \F $. We believe that all these conditions include \eqref{eq:A4}. 

So, rather than proving \eqref{eq:lambda}, we show in 
Lemma~\hyperlink{lem:coercive}{Coercive} that when 
$ \Lambda(u_n,\om_n) $ converges to its infimum, each component of 
$ \om_n $ converges to $ \sqrt{2\alpha} < m $.
\section{Properties of the functional $ E $}
We recall some properties of the functional $ E $. We include the
proof of them only for the sake of completeness, as they are
similar to the scalar case \cite{BBBM10}.
\begin{prop}
\label{prop:properties}
Suppose that $ \G $ fullfils the assumptions \eqref{eq:A1} and
\eqref{eq:A2}. Then, $ E $ is continuously differentiable; if
$ \sg\in (0,+\infty)\sp k $, then $ E $ is coercive on $ \M_\sg $.
\end{prop}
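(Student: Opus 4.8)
The plan is to treat the two assertions separately. For the $ C\sp 1 $ regularity I would write $ E = E_0 + \Phi $, where
\[
E_0 (u,\om) := \tfrac{1}{2}\sum_{j = 1}\sp k\nint |\D u_j|\sp 2 + (m_j\sp 2 + \om_j\sp 2) u_j\sp 2,\qquad \Phi(u) := \nint\G(u).
\]
The quadratic part $ E_0 $ is a polynomial in $ \om $ whose coefficients are continuous quadratic forms in $ u $, hence it is smooth on $ \HH\times\Sigma $. For $ \Phi $ I would appeal to the standard theory of the superposition (Nemytski) operator: integrating \eqref{eq:A2} from the origin and using $ \G(0) = 0 $ gives $ |\G(\z)|\leq\Ce(|\z|\sp p + |\z|\sp q) $, and since $ 2 < p\leq q < 2\N/(\N - 2) $ the Sobolev embeddings $ \HH\hookrightarrow L\sp p(\RN,\R\sp k) $ and $ \HH\hookrightarrow L\sp q(\RN,\R\sp k) $ are continuous; the bound on $ \D\G $ then shows that $ u\mapsto\D\G(u) $ maps continuously into $ L\sp{p'}(\RN,\R\sp k) + L\sp{q'}(\RN,\R\sp k) $, which is exactly what is needed to conclude that $ \Phi $ is $ C\sp 1 $ with $ \Phi'(u)v = \nint\D\G(u)\cdot v $. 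I expect this step to be routine.

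For the coercivity I would rewrite the energy through $ \F $. Since $ \tfrac{1}{2}\sum_j m_j\sp 2\nint u_j\sp 2 + \nint\G(u) = \nint\F(u) $,
\[
E(u,\om) = \tfrac{1}{2}\sum_{j = 1}\sp k\nint|\D u_j|\sp 2\ +\ \nint\F(u)\ +\ \tfrac{1}{2}\sum_{j = 1}\sp k\om_j\sp 2\nint u_j\sp 2,
\]
and by \eqref{eq:A1} each of the three summands is nonnegative. Thus, along any sequence in $ \M_\sg $ on which $ E $ is bounded, the first summand gives at once a bound on $ \sum_j\no{\D u_j}_{L\sp 2}\sp 2 $. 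Moreover the constraint $ \om_j\nint u_j\sp 2 = \sg_j $ turns the last summand into $ \tfrac{1}{2}\sum_j\om_j\sg_j $, so a bound $ E\leq\Ce $ forces $ \om_j\leq 2\Ce/\sg_j $; hence $ \om $ stays bounded in $ \Sigma $. It is here that $ \sg\in(0,+\infty)\sp k $ is used, both to bound $ \om $ and to guarantee $ u_j\neq 0 $; on all of $ \HH\times\Sigma $ the functional is not coercive, as seen by taking $ u = 0 $ and $ \om\ra\infty $.

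The point that is not immediate, and which I expect to be the main obstacle, is the $ L\sp 2 $ bound on $ u $: since \eqref{eq:A1} only provides $ \F\geq 0 $, the middle term $ \nint\F(u) $ does not by itself control $ \no{u}_{L\sp 2} $. I would overcome this by combining the gradient bound already obtained with a Gagliardo--Nirenberg interpolation. From $ \nint\F(u)\leq\Ce $ and $ \nint\F(u) = \tfrac{1}{2}\sum_j m_j\sp 2\nint u_j\sp 2 + \nint\G(u) $ one gets $ \tfrac{1}{2}m\sp 2\no{u}_{L\sp 2}\sp 2\leq\Ce + |\nint\G(u)| $, while $ |\nint\G(u)|\leq\Ce(\no{u}_{L\sp p}\sp p + \no{u}_{L\sp q}\sp q) $. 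For every $ s\in(2,2\N/(\N - 2)) $ Gagliardo--Nirenberg gives $ \no{u}_{L\sp s}\sp s\leq\Ce\no{\D u}_{L\sp 2}\sp{s\theta}\no{u}_{L\sp 2}\sp{s(1 - \theta)} $ with $ s(1 - \theta) < 2 $; since the gradient is already bounded, this yields an inequality of the form $ \no{u}_{L\sp 2}\sp 2\leq\Ce\big(1 + \no{u}_{L\sp 2}\sp a + \no{u}_{L\sp 2}\sp b\big) $ with $ a,b < 2 $, which forces $ \no{u}_{L\sp 2} $ to be bounded. Collecting the three bounds shows that every energy-bounded sequence in $ \M_\sg $ is bounded in $ \HH\times\Sigma $, i.e. $ E $ is coercive on $ \M_\sg $.
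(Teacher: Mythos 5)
Your proposal is correct, and the $C^1$ part (which the paper simply delegates to \cite{AP93} and \cite{Gar11}) as well as the bounds $\no{\D u}_{L^2}^2\leq 2E$ and $\om_j\leq 2E/\sg_j$ via the constraint agree with the paper. The decisive step, however --- the $L^2$ bound on $u$ --- is obtained by a genuinely different device. You isolate $\tfrac12 m^2\no{u}_{L^2}^2$ inside $\nint\F(u)$, control $|\nint\G(u)|$ by $\no{u}_{L^p}^p+\no{u}_{L^q}^q$ using the primitive of \eqref{eq:A2}, and close the estimate by Gagliardo--Nirenberg interpolation, absorbing powers of $\no{u}_{L^2}$ strictly below $2$ (this works because $s(1-\theta)<2$ for $s\in(2,2\N/(\N-2))$ exactly when $\N>2$). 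The paper never bounds $\G$ from above globally: it uses \eqref{eq:A2} only near the origin to obtain the pointwise inequality $\F(\z)\geq m^2|\z|^2/4$ for $|\z|\leq\var$, then splits $\RN$ into $\{|u|<\var\}$ and $\{|u|\geq\var\}$, bounding $\int_{|u|<\var}|u|^2$ by $4E/m^2$ from that inequality together with $\F\geq 0$, and bounding $\int_{|u|\geq\var}|u|^2$ by $\var^{2-2^*}\int|u|^{2^*}\leq c^{2^*}\var^{2-2^*}\no{\D u}_{L^2}^{2^*}$ via Chebyshev and the critical Sobolev embedding. The paper's route is slightly more economical in its use of the hypotheses (it needs the superquadratic smallness of $\G$ only at the origin, plus the sign condition $\F\geq 0$, and would survive a weakening of the upper growth bound at infinity); yours is the more standard interpolation argument and exploits the full polynomial bound of \eqref{eq:A2}. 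Both are valid under the stated assumptions.
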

\begin{proof}
The continuity and the differentiability of $ E $ follows from analogous
techniques used in theorems on bounded domains as 
\cite[Theorem~2.2 and 2.6, p.\,16,17]{AP93}.
For a detailed proof we also refer to \cite[Proposition~2]{Gar11}.\par
Let $ (u,\om)\in M_\sg $ and set $ E = E(u,\om) $. By \eqref{eq:A2}, we have
\begin{align}
\label{eq:prop:properties-1}
\om_i \leq\frac{2E}{\sg_i},\quad
\no{\D u}_{L\sp 2} \sp 2\leq 2E.
\end{align}
By \eqref{eq:A2} there exists $ \var > 0 $ such that
\begin{equation}
\label{eq:prop:properties-2}
\F(u)\geq m ^2 |u|^2 /4,\text{ if } |u|\leq\var.
\end{equation}
We have
\[
E\geq\int_{|u|\geq\var} \F(u) + \int_{|u| < \var} \F(u).
\]
From \eqref{eq:prop:properties-2}, it follows that
\begin{equation}
\label{eq:prop:properties-3}
\no{u}_{L^2 (|u| < \var)} ^2\leq 4E/m ^2.
\end{equation}
On the other hand, by the Sobolev inequality
\begin{equation}
\label{eq:prop:properties-4}
\begin{split}
\int_{|u|\geq\var} |u|\sp 2 =&\,\var\sp{2 - 2\sp *}\int_{|u|\geq\var}
\var\sp{2\sp * - 2} |u|\sp 2\leq 
\var\sp{2 - 2\sp *} \int_{|u|\geq\var} |u|\sp{2\sp *}\\
\leq&\,c\sp{2\sp *} \var\sp{2 - 2\sp*}\no{\D u}_{L\sp 2} \sp{2\sp *}
\end{split}
\end{equation}
where $ c $ is the constant in the proof of 
\cite[Th\'eor\`eme~IX.9,p.\,165]{Bre83}.
From \eqref{eq:prop:properties-3} and \eqref{eq:prop:properties-4}
\[
\no{u}\sp 2 _{L\sp 2}\leq 
\frac{4E}{m ^2} + 2 c\sp{2\sp *} \var\sp{2 - 2\sp*} E.
\]
Along with \eqref{eq:prop:properties-1}, we obtained that the
sub-levels of $ E $ are bounded, then $ E $ is coercive.
\end{proof}
%\section{Existence of stationary points on $ M_C $}
\noindent 
Hereafter, we assume that $ \sg_j > 0 $ for every $ 1\leq j\leq k $.
\begin{prop}
\label{prop:palais-smale}
Let $ (u_n,\om_n)\subset M_\sg\sp r  $ be a Palais-Smale 
sequence and $ \om_n \ra\om $ such that $ \om_i < m_i $.
Then $ (u_n) $ has a converging subsequence.
\end{prop}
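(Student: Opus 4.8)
The plan is to combine the coercivity from Proposition~\ref{prop:properties} with the compactness of the radial Sobolev embedding and, crucially, the sign condition $\om_j < m_j$. First I would observe that along a Palais--Smale sequence the values $E(u_n,\om_n)$ are bounded, and since $\om_n$ converges it is bounded; hence by Proposition~\ref{prop:properties} the sequence lies in a bounded sublevel of $E$ on $\M_\sg$, so $(u_n)$ is bounded in $H_r$. Passing to a subsequence, $u_n\wk u$ in $H_r$ with $u\in H_r$, and by the compactness of the embedding $H\sp 1 _r(\RN)\hookrightarrow L\sp p(\RN)$ for $2 < p < 2^*$ we get $u_{n,j}\ra u_j$ strongly in $L\sp p$ and $L\sp q$ for the exponents of \eqref{eq:A2}.

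The delicate point is the $L\sp 2$ norm, which is \emph{not} controlled by compactness. Here I would exploit the constraint: since $(u_n,\om_n)\in M_\sg\sp r$ with $\sg_j > 0$, the identity $\om_{n,j}\nint u_{n,j}\sp 2 = \sg_j$ forces $\om_{n,j} > 0$, and as $\nint u_{n,j}\sp 2$ is bounded, $\om_{n,j}$ is bounded below by a positive constant; therefore $\om_j > 0$ and $\nint u_{n,j}\sp 2 = \sg_j/\om_{n,j}\ra\sg_j/\om_j$. Thus $\lim_n\no{u_{n,j}}_{L\sp 2}\sp 2 = \sg_j/\om_j$, while weak lower semicontinuity gives $\no{u_j}_{L\sp 2}\sp 2\leq\sg_j/\om_j$.

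Next I would recover the Euler--Lagrange structure from the Palais--Smale condition. Differentiating the constraints in the $\om$ variable (note $\pt_{\om_j}E = \om_j\nint u_j\sp 2$ and $\pt_{\om_j}\CC_i = \delta_{ij}\nint u_j\sp 2$) shows the Lagrange multipliers equal $\om_{n,j} + o(1)$, and the constraints are uniformly non-degenerate because $\nint u_{n,j}\sp 2 = \sg_j/\om_{n,j}$ stays bounded away from $0$; hence the extraction is legitimate. Differentiating in $u$ then yields that $(u_n)$ solves \eqref{eq:E} approximately, i.e.
\[
\nint \D u_{n,j}\cdot\D\phi + (m_j\sp 2 - \om_{n,j}\sp 2) u_{n,j}\phi
+ \pt_{z_j}\G(u_n)\phi = o(1)\no{\phi}_{H\sp 1}
\]
for every $\phi\in H_r$ and each $1\leq j\leq k$.

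Finally I would test this with $\phi = u_{n,j} - u_j$. The nonlinear term tends to $0$, since $\pt_{z_j}\G(u_n)$ is bounded in the dual Lebesgue spaces by \eqref{eq:A2} while $u_{n,j} - u_j\ra 0$ in $L\sp p$ and $L\sp q$; using weak convergence to evaluate the cross terms, and passing to a further subsequence so that $\no{\D u_{n,j}}_{L\sp 2}\sp 2$ converges, I arrive at
\[
\Big(\lim_n\no{\D u_{n,j}}_{L\sp 2}\sp 2 - \no{\D u_j}_{L\sp 2}\sp 2\Big)
+ (m_j\sp 2 - \om_j\sp 2)\Big(\frac{\sg_j}{\om_j} - \no{u_j}_{L\sp 2}\sp 2\Big) = 0.
\]
Both brackets are nonnegative, the first by weak lower semicontinuity of the $H\sp 1$ seminorm and the second because $m_j\sp 2 - \om_j\sp 2 > 0$, so they must both vanish. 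This gives convergence of the gradient norms and of the $L\sp 2$ norms, which upgrades weak to strong convergence $u_{n,j}\ra u_j$ in $H\sp 1$; summing over $j$ yields $u_n\ra u$ in $H$. The main obstacle is precisely this $L\sp 2$ piece: since $H\sp 1 _r\hookrightarrow L\sp 2$ is not compact, mass could a priori escape, and the strict inequality $\om_j < m_j$ is exactly what makes the coefficient $m_j\sp 2 - \om_j\sp 2$ positive so that the $L\sp 2$ term is coercive and no mass is lost.
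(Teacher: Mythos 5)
Your argument is correct, and the first half (coercivity from Proposition~\ref{prop:properties}, the compact radial embedding into $L\sp p\cap L\sp q$, and the identification of the Lagrange multipliers as $\om_n\sp j+o(1)$ via the $\om$-derivatives of $E$ and $C_j$) coincides with the paper's. Where you diverge is the final compactness step. The paper subtracts the approximate Euler--Lagrange equations at two indices $n,m$ and tests with $\phi=u_n-u_m$, obtaining a quantitative estimate of the form $\no{u_n-u_m}_H\sp{7/8}\lesssim\gamma_n+\gamma_m+\delta_{n,m}$ and concluding that $(u_n)$ is a Cauchy sequence in $H_r$; it never needs to identify the weak limit or its $L\sp 2$ norm. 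You instead test the single equation at index $n$ with $\phi=u_n-u$, split the result into the two nonnegative brackets
\[
\Big(\lim_n\no{\D u_n\sp j}_{L\sp 2}\sp 2-\no{\D u_j}_{L\sp 2}\sp 2\Big)
\quad\text{and}\quad
(m_j\sp 2-\om_j\sp 2)\Big(\tfrac{\sg_j}{\om_j}-\no{u_j}_{L\sp 2}\sp 2\Big),
\]
and force both to vanish, upgrading weak to strong convergence by norm convergence in a Hilbert space. Both routes hinge on the same two ingredients --- the strict sign $m_j\sp 2-\om_j\sp 2>0$ to control the non-compact $L\sp 2$ direction, and the strong $L\sp p\cap L\sp q$ convergence together with \eqref{eq:A2} to kill the nonlinear term. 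Your version buys a little extra information for free (the limit $u$ satisfies $\om_j\nint u_j\sp 2=\sg_j$, i.e.\ $(u,\om)\in M_\sg\sp r$, since no mass is lost), which the paper would have to argue separately; the paper's Cauchy argument avoids any discussion of the weak limit and gives an explicit rate in terms of $\gamma_n$ and $\delta_{n,m}$. One small point of care in your write-up: the limit $\lim_n\no{\D u_n\sp j}_{L\sp 2}\sp 2$ need only exist along a further subsequence, as you note, and this is harmless since only subsequential convergence is claimed.
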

\begin{proof}
By Proposition~\ref{prop:properties}, $ (u_n) $
is bounded. Thus, by \cite[Theorem~A.I']{BL83-I}, we can suppose that 
\begin{equation}
\label{eq:prop:palais-smale-9}
u_n \sp j\rightharpoonup u_j\text{ in } H\sp 1 _r,\quad
u_n \sp j\ra u_j\text{ in } L\sp p \cap L\sp q 
\end{equation}
for every $ 1\leq j\leq k $.
Because $ (u_n,\om_n) $ is a Palais-Smale sequence, there are 
\[
(\lambda_n)\subset\R,\quad
(v_n,\eta_n)\subset H_r \sp *\times\R\sp k
\]
such that
\begin{equation}
\label{eq:prop:palais-smale-1}
\D E (u_n,\omega_n) = \sum_{j = 1}\sp k \lambda_n \sp\kk
\D C_j (u_n,\omega_n) + (v_n,\eta_n),\quad (v_n,\eta_n)\ra 0.
\end{equation}
We multiply \eqref{eq:prop:palais-smale-1} by $ (0,e_j)\in \{0\}\times\R\sp k $
and obtain
\[
\omega_n\sp\kk\no{u_n\sp\kk}_{L^2} ^2 = 
\lambda_n \sp\kk\no{u_n\sp\kk}_{L^2} ^2 + \eta_n \sp\kk
\]
whence
\begin{equation}
\label{ps:2}
\lambda_n \sp\kk  = \omega_n\sp\kk - \frac{\eta_n \sp\kk\omega_n \sp\kk}%
{\sg_j}.
\end{equation}
We multiply \eqref{eq:prop:palais-smale-1} by 
$ (\phi,0)\in H_r \times\{0\} $ and obtain
\[
\begin{split}
\sum_{j = 1}\sp k (\D u_n\sp j,\D\phi_j)_{L\sp 2} &+ 
m_j \sp 2 (u_n\sp j,\phi_j)_{L\sp 2} +
%(u,\phi)_H &+ 
\nint \D\G(u_n)\cdot\phi \\
&+ 
\sum_{\kk = 1}\sp k (\omega_n \sp\kk)^2 (u_n \sp\kk ,\phi_\kk)_{L^2}
- 2\sum_{\kk = 1}\sp k\lambda_n \sp\kk  \omega_n \sp\kk
(u_n \sp\kk ,\phi_\kk)_{L^2} = (v_n,\phi)_H
\end{split}
\]
which, by (\ref{ps:2}), becomes
\begin{equation}
\label{eq:prop:palais-smale-3}
\begin{split}
%\sum_{j = 1}\sp k (\D u_n\sp j,\D\phi_j)_{L\sp 2}
%\sum_{j = 1}\sp k (u_n\sp j,\phi_j)_{H\sp 1}
%&+ \nint \D\F(u_n)\cdot\phi 
\sum_{j = 1}\sp k (\D u_n\sp j,\D\phi_j)_{L\sp 2} &+ 
m_j \sp 2 (u_n\sp j,\phi_j)_{L\sp 2} +
%(u,\phi)_H &+ 
\nint \D\G(u_n)\cdot\phi \\
&-  
\sum_{\kk = 1}\sp k (\om_n \sp\kk)\sp 2 (u_n \sp\kk,\phi_\kk)_{L^2} = 
(v_n,\phi)_H - 2\sum_{\kk = 1}\sp k \frac{\eta_n \sp\kk %
(\om_n \sp\kk)\sp 2}%
{\sg_j} (u_n \sp\kk ,\phi_\kk)_{L^2}.
\end{split}
\end{equation}
From \eqref{eq:A1}, \eqref{eq:prop:palais-smale-3} can
be written as
\begin{equation}
\label{ps:3} 
\begin{split}
&\sum_{j = 1}\sp k (\D u_n\sp j,\D\phi_j)_{L\sp 2} + 
(m_j \sp 2 - \om_j\sp 2)(u_n\sp j,\phi_j)_{L\sp 2}\\
=& 
(v_n,\phi)_H - 
\sum_{j = 1}\sp k \beta_n \sp j 
 (u_n\sp j,\phi_j)_{L\sp 2}\\
-& \nint (\D\G(u_n) - \D\G(u))\cdot(u_n - u)
\end{split}
\end{equation}
where
\begin{equation}
\label{eq:prop:palais-smale-11}
\beta_n\sp j := \left(\om_j\sp 2 - (\om_n \sp j)\sp 2 - 
\frac{2 \eta_n \sp j (\om_n \sp j)\sp 2}{\sg_j}\right)\ra 0,
\quad 1\leq j\leq k.
\end{equation}
Given a pair of integers $ (n,m) $, taking the difference of the equations, 
$ (\ref{ps:3}_n) $ and 
$ (\ref{ps:3}_m) $ with $ \phi = u_n - u_m $, we obtain
\begin{equation}
\label{eq:prop:palais-smale-5}
\begin{split}
&\sum_{\kk = 1}\sp k \|\D u_n\sp j - \D u_m\sp j\|_{L^2} ^2 + 
 (m_\kk ^2 - \om_\kk ^2 + \beta_n \sp j + \beta_m \sp j)
\|u_n \sp j - u_m\sp j\|_{L^2} ^2 \\
=& (v_n - v_m,u_n - u_m)_H - 
\nint\Big(\D\G(u_n) - \D\G(u_m)\Big)\cdot (u_n - u_m).
\end{split}
\end{equation}
Thus from the assumption $ \omega_j < m_j $ and 
\eqref{eq:prop:palais-smale-11}, there exists $ c_0 > 0 $ such that
\begin{equation}
  \label{eq:prop:palais-smale-8}
\begin{split}
\no{u_n - u_m}_H \sp 2&\leq
c_0\sum_{\kk = 1}\sp k \Big(\|\D u_n\sp j - \D u_m\sp j\|_{L^2} ^2
\\
& +  
 (m_\kk ^2 - \om_\kk ^2 + \beta_n \sp j + \beta_m\sp j)
\|u_n \sp j - u_m\sp j\|_{L^2} ^2\Big)
\end{split}
\end{equation}
and 
\begin{equation}
\label{eq:prop:palais-smale-7}
(v_n - v_m,u_n - u_m)_H \leq \no{u_n - u_m}_H (\gamma_n + \gamma_m)
%\sum_{j = 1}\sp k 
%\no{v_n \sp j - v_m\sp j}_{H_1} \no{u_n \sp j - u_m\sp j}_{H_1}\\
%&\leq
%\left(\sum_{j = 1}\sp k \no{u_n \sp j - u_m\sp j}_{H\sp 1}\sp 2\right)\sp {1/2}
%\cdot (\gamma_n + \gamma_m)
\end{equation}
where
\begin{equation}
\label{eq:prop:palais-smale-12}
\gamma_n := \no{v_n}_H\ra 0.
\end{equation}
We have
\[
\begin{split}
&\left|\nint \Big(\D\G(u_n) - \D\G(u_m)\Big)\cdot (u_n - u_m)\right|\\
\leq&
\sum_{j = 1}\sp k \Big(\no{u_n \sp j - u_m\sp j}_{L\sp p} 
+ \no{u_n \sp j - u_m\sp j}_{L\sp q}\Big)
\end{split}
\]
it is convenient to estimate each of the two summand of the inequality above 
as follows: by 
\cite[Corollaire~IX.10,p.\,165]{Bre83}
\begin{equation}
\label{eq:prop:palais-smale-6}
\begin{split}
\no{u_n \sp j - u_m\sp j}_{L\sp p} &= 
\no{u_n \sp j - u_m\sp j}_{L\sp p}\sp{1/2}
\no{u_n \sp j - u_m\sp j}_{L\sp p}\sp{1/2}\\
&\leq
\no{u_n \sp j - u_m\sp j}_{L\sp p}\sp{1/2}
\no{u_n \sp j - u_m\sp j}_{H\sp 1}\sp{1/2}\\
&\leq \delta_{n,m} \sp p \no{u_n \sp j - u_m\sp j}_{H\sp 1}\sp{1/2}
\end{split}
\end{equation}
where
\[
\delta_{n,m}\sp p := \max_{1\leq j\leq k}
\no{u_n \sp j - u_m\sp j}_{L\sp p}\sp{1/2}
\]
is infinitesimal by \eqref{eq:prop:palais-smale-9}.
By the H\"older inequality, we have
\[
\sum_{j = 1}\sp k \no{u_n\sp j - u_m\sp j}_{H\sp 1}\sp{1/2}
\leq k\sp{4/3}\no{u_n - u_m}_H \sp{1/4}
\]
whence
\begin{equation}
\label{eq:prop:palais-smale-10}
\begin{split}
&\left|\nint \Big(\D\G(u_n) - \D\G(u_m)\Big)\cdot (u_n - u_m)\right|\\
\leq&
k\sp{4/3} (\delta_{n,m} \sp p + \delta_{n,m} \sp q) 
\no{u_n - u_m}_H \sp{1/4}.
\end{split}
\end{equation}
Now, putting together 
(\ref{eq:prop:palais-smale-8},\ref{eq:prop:palais-smale-7},
\ref{eq:prop:palais-smale-10}) we obtain
\[
(c_0)\sp{-1}
\no{u_n - u_m}_H \sp{7/8}
\leq c_1 (\gamma_n + \gamma_m) + \delta_n + \delta_m
\]
where
\[
c_1 = \sup_{n,m} \left(\sum_{j = 1}\sp k 
\no{u_n \sp j - u_m\sp j}_{H\sp 1}\sp 2\right)\sp{3/4}.
\]
Then each of $ (u_n \sp j) $ is a Cauchy sequence in $ H\sp 1 $ for
every $ 1\leq j\leq k $, thus converges to $ v_j\in H\sp 1 $. 
From \eqref{eq:prop:palais-smale-9}, $ v_j = u_j $, thus $ u_n\ra u $
in $ H_r $.
\end{proof}
\section{Properties of $ \Lambda $}
\noindent We define the following energy/charge ratio
\[
\Lambda(u,\om) := \frac{E(u,\om)}{\sum_{j = 1} \sp k C_j (u,\om)}
\]
and introduce the notation
\[
a(u) := \frac{1}{2}\nint |\D u|\sp 2 + \nint\F(u),\quad b_j (u) := 
\nint u_j \sp 2.
\]
If we fix $ u\in H_* $, we have the smooth function defined on
$ \Sigma_* $
\[
\Lambda(u,\cdot)\colon\Sigma\ra\R,\quad
\om\mapsto \Lambda(u,\om) = \frac{1}{2}\cdot 
\frac{2a(u) + \sum_{j = 1}\sp k b_j (u) \om_j \sp 2}%
{\sum_{j = 1} \sp k b_j (u)\om_j}
\]
It is not hard to check, arguing by induction on $ k $, 
that the following properties hold for $ \Lambda(u,\cdot) $:
\begin{enumerate}
\item is non-negative and achieves its infimum in a (unique) interior point
lying on the principal diagonal. We denote this point by $ \om(u) $ and
each of its components by $ \xi(u) $;
\item there holds
\[
\Lambda(u,\om(u)) = \xi(u),\quad \xi(u)\sp 2 = 
\frac{2a(u)}{\sum_{j = 1}\sp k b_j(u)}.
\]
\end{enumerate}
\begin{prop}
\label{prop:hylomorphy}
$ \inf_{H_*} \xi = \sqrt{2\alpha} $.
\end{prop}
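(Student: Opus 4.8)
The plan is to prove the two bounds $\inf_{H_*}\xi \ge \sqrt{2\alpha}$ and $\inf_{H_*}\xi \le \sqrt{2\alpha}$ separately. Since the second property of $\Lambda(u,\cdot)$ recorded above gives
\[
\xi(u)\sp 2 = \frac{2a(u)}{\sum_{j = 1}\sp k b_j(u)} =
\frac{\nint |\D u|\sp 2 + 2\nint \F(u)}{\nint |u|\sp 2},
\]
it suffices to show that the infimum of $\xi\sp 2$ equals $2\alpha$.

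First I would treat the lower bound. By \eqref{eq:A0} the function $\F(\z) = \F(|\z_1|,\dots,|\z_k|)$ depends only on the moduli of the components, so for every $u\in H_*$ and almost every $x$ the point $(|u_1(x)|,\dots,|u_k(x)|)$ lies in $\Sigma$ and has modulus $|u(x)|$; hence $\F(u(x)) \ge \alpha|u(x)|\sp 2$ by the definition of $\alpha$ in \eqref{eq:A3} (the bound being trivial where $u(x) = 0$, since $\F(0) = 0$). Integrating this pointwise inequality and discarding the nonnegative term $\nint|\D u|\sp 2$ yields $\xi(u)\sp 2\ge 2\alpha$, and therefore $\inf_{H_*}\xi\sp 2\ge 2\alpha$.

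For the upper bound I would, for each fixed $\z\in\Sigma_*$, spread the constant profile $\z$ over a large ball. Let $\chi_R$ be the radial Lipschitz cutoff equal to $1$ on $B_R$, vanishing outside $B_{R + 1}$ and linear in between, so that $|\D\chi_R|\le 1$, and set $u_R := \z\chi_R\in H_r\sp *$. Then $\nint|\D u_R|\sp 2\le |\z|\sp 2|B_{R + 1}\setminus B_R|$ while $\nint|u_R|\sp 2\ge|\z|\sp 2 |B_R|$, so the ratio $\nint|\D u_R|\sp 2/\nint|u_R|\sp 2$ is $O(1/R)$; and since $\F$ is continuous and hence bounded on the segment $\{t\z\,:\,t\in[0,1]\}$, the annular contribution to $\nint\F(u_R)$ is dominated by $|B_{R + 1}\setminus B_R|$, whence $\nint\F(u_R)/\nint|u_R|\sp 2\to\F(\z)/|\z|\sp 2$ as $R\to\infty$. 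Consequently $\inf_{H_*}\xi\sp 2\le 2\F(\z)/|\z|\sp 2$, and taking the infimum over $\z\in\Sigma_*$ gives $\inf_{H_*}\xi\sp 2\le 2\alpha$ by \eqref{eq:A3}.

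The two bounds combine to give $\inf_{H_*}\xi = \sqrt{2\alpha}$. The delicate step is the upper bound: one must verify that flattening $\z$ over $B_R$ drives the gradient term to zero without inflating the ratio $\nint\F(u_R)/\nint|u_R|\sp 2$ above $\F(\z)/|\z|\sp 2$. This succeeds because a fixed-width annulus has volume $O(R\sp{\N - 1})$, of strictly lower order than the volume $O(R\sp\N)$ of the ball on which $u_R\equiv\z$, so both the gradient energy and the annular nonlinear energy are asymptotically negligible. Note also that, since the lower bound holds throughout $H_*$ while the upper bound is realised along radial functions, the value of the infimum is unaffected by whether $H_*$ means $H\setminus 0$ or $H_r\setminus 0$.
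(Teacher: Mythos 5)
Your proposal is correct and follows essentially the same route as the paper: the lower bound comes from the pointwise inequality $\F(u)\geq\alpha|u|\sp 2$ after discarding the gradient term, and the upper bound uses the same truncated constant profile $\z\chi_R$ with a unit-width linear cutoff, whose annular contributions are $O(R\sp{\N-1})=o(R\sp\N)$. (Your bookkeeping is in fact slightly more careful than the paper's: you invoke \eqref{eq:A0} to reduce to moduli in the lower bound, and your estimate $\no{\D u_R}_{L\sp 2}\sp 2=O(R\sp{\N-1})$ corrects the paper's stated $O(R\sp{\N-2})$, either of which suffices.)
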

\begin{proof}
That the right member is not greater than the left one, follows
from the definition of $ \alpha $. In fact,
\[
\xi(u)\sp 2 = \frac{\nint |\D u|\sp 2 + 2\nint \F(u)}%
{\sum_{j = 1}\sp k b_j (u)}\geq 
\frac{\nint |\D u|\sp 2 + 2\alpha \nint |u|\sp 2}%
{\nint |u|\sp 2}\geq 2\alpha,
\]
where in the last inequality we neglected the gradient terms.
In order to prove the opposite inequality, we define
\[
u_R (x) = 
\begin{cases}
\z &\text{ if } |x|\leq R\\
(1 + R - |x|) \z  & \text{ if } R\leq |x|\leq R + 1\\
0 & \text{ if } |x|\geq R + 1.
\end{cases}
\]
where $ \z\in\Sigma_* $ is an arbitrary point and
$ R > 0 $. We compute its gradient
\[
\D u_R \sp j (x) = 
\begin{cases}
0 & \text{ if } |x|\leq R \text{ or } |x| \geq R + 1\\
- \dfrac{\z_j x}{|x|} & \text{ otherwise. }
\end{cases}
\]
By standard computations, we have
\begin{gather*}
\no{u_R\sp j}_{L\sp 2} \sp 2 = \mu(B_1) R\sp\N \z_j\sp 2 + 
O(R\sp{\N - 1})\\
\nint \F(u_R) = \mu(B_1) R\sp\N \F(\z) + O(R\sp{\N - 1}),\\
\no{\D u_R\sp\kk}_{L\sp 2} \sp 2 = O(R\sp{\N - 2}),
\end{gather*}
where $ B_1 $ is the unit ball of $ \RN $ and $ \mu(B_1) $ is its
Lebesgue measure. Then,
\[
\xi(u_R)\sp 2 = \frac{2\mu(B_1) R\sp\N\F(\z) + o(R\sp\N)}%
{\mu(B_1) R\sp\N |\z|\sp 2 + o(R\sp\N)} = 
o(1) + \frac{2\F(\z)}{|\z|\sp 2}.
\]
Taking the limit as $ R\ra +\infty $, we obtain
\[
\inf_{H_*} \xi\sp 2\leq \frac{2\F(\z)}{|\z|\sp 2}
\]
for ever $ \z\in\Sigma_* $. Because $ \z $ was chosen arbitrarily,
we obtain the conclusion.
\end{proof}
Looking at the behaviour of $ \Lambda(u,\cdot) $,
one can easily deduce that sequences converging to the minimum value
converge to the minimum point. The next lemma exploits the uniform
behaviour of $ \Lambda $ on $ u $.
\begin{lemma*}[Coercive]
\hypertarget{lem:coercive}{}
For every $ \var > 0 $ there exists $ \eta $ such that 
\[
\Lambda(u,\om) < \sqrt{2\alpha} + \eta
\]
implies
\[
|\om_j - \sqrt{2\alpha}| < \var
\]
\end{lemma*}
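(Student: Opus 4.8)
The plan is to turn the explicit formula for $\Lambda(u,\cdot)$ into a single algebraic identity and then feed in the pointwise bound coming from \eqref{eq:A4}. Fixing $u\in H_*$, write $B:=\sum_{j=1}\sp k b_j(u)$ and $\beta_j:=b_j(u)/B$, so that $\beta_j>0$ and $\sum_j\beta_j=1$, and recall the identity $\xi(u)\sp 2=2a(u)/B$. Substituting $A/B=\xi(u)\sp 2$ in the formula for $\Lambda(u,\cdot)$ and setting $D:=\sum_j\beta_j\om_j$, the numerator $\xi(u)\sp 2+\sum_j\beta_j\om_j\sp 2$ equals $\sum_j\beta_j(\om_j-\xi(u))\sp 2+2\xi(u)D$ (using $\xi(u)\sp2=\sum_j\beta_j\xi(u)\sp2$), which produces the key identity
\[
\Lambda(u,\om)-\xi(u)=\frac{\sum_{j=1}\sp k\beta_j(\om_j-\xi(u))\sp 2}{2D}\geq 0 .
\]
By Proposition~\ref{prop:hylomorphy} we have $\xi(u)\geq\sqrt{2\alpha}$, so the hypothesis $\Lambda(u,\om)<\sqrt{2\alpha}+\eta$ forces simultaneously $\xi(u)<\sqrt{2\alpha}+\eta$ and $\sum_j\beta_j(\om_j-\xi(u))\sp 2<2D\eta$.

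Next I would bound $D$ from above, uniformly in $u$. Since $\F\geq 0$ by \eqref{eq:A1}, dropping the gradient and discarding $\xi(u)\sp2\geq0$ in the identity above gives $\Lambda(u,\om)\geq\sum_j\beta_j\om_j\sp2/(2D)$, and Jensen's inequality for the probability weights $\beta_j$ yields $\sum_j\beta_j\om_j\sp2\geq D\sp2$, whence $D\leq 2\Lambda(u,\om)<2(\sqrt{2\alpha}+\eta)$. Combined with the previous step this gives
\[
\sum_{j=1}\sp k\beta_j(\om_j-\xi(u))\sp 2<4(\sqrt{2\alpha}+\eta)\eta .
\]

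The main obstacle is that a single weight $\beta_{j}$ could a priori be arbitrarily small, in which case this estimate controls nothing about $|\om_j-\xi(u)|$; ruling this out is exactly the role of \eqref{eq:A4}. Using the pointwise inequality $\F(\z)\geq\alpha_j\sum_{h\neq j}\z_h\sp2$ (valid by \eqref{eq:A4} and, where $\sum_{h\neq j}\z_h\sp2=0$, by $\F\geq0$) and integrating, I obtain
\[
\xi(u)\sp 2=\frac{\nint|\D u|\sp2+2\nint\F(u)}{B}\geq\frac{2\alpha_j(B-b_j)}{B}=2\alpha_j(1-\beta_j).
\]
Comparing with $\xi(u)\sp2<(\sqrt{2\alpha}+\eta)\sp2$ and writing $\delta:=\min_j(\alpha_j-\alpha)>0$, a short rearrangement gives $\beta_j>(2\delta-2\sqrt{2\alpha}\,\eta-\eta\sp2)/(2\max_h\alpha_h)$; hence, for $\eta$ small enough that $2\sqrt{2\alpha}\,\eta+\eta\sp2<\delta$, there is a lower bound $\beta_j\geq\beta_*>0$ holding simultaneously for all $j$ and independent of $u$.

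With this uniform bound the conclusion is immediate. For every $j$,
\[
(\om_j-\xi(u))\sp2\leq\beta_*\sp{-1}\sum_{h=1}\sp k\beta_h(\om_h-\xi(u))\sp2<\frac{4(\sqrt{2\alpha}+\eta)\eta}{\beta_*},
\]
so that $|\om_j-\sqrt{2\alpha}|\leq|\om_j-\xi(u)|+|\xi(u)-\sqrt{2\alpha}|$ is dominated by a quantity that tends to $0$ as $\eta\to0$. It then suffices to choose $\eta$ small enough that both the constraint $2\sqrt{2\alpha}\,\eta+\eta\sp2<\delta$ and the resulting bound being below $\var$ are met, which proves the claim.
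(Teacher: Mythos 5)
Your proof is correct and follows essentially the same three-step strategy as the paper's: the exact quadratic identity $\Lambda(u,\om)-\xi(u)=\sum_j\beta_j(\om_j-\xi(u))^2/(2D)$, a uniform positive lower bound on the normalised weights $\beta_j$ extracted from \eqref{eq:A4}, and an a priori upper bound on the frequencies, followed by the triangle inequality through $\xi(u)\in[\sqrt{2\alpha},\sqrt{2\alpha}+\eta)$. The only differences are cosmetic: you bound the mean $D=\sum_j\beta_j\om_j$ directly via Jensen's inequality where the paper bounds each $\om_j$ separately, and your derivation $\xi(u)^2\geq 2\alpha_j(1-\beta_j)$ is a slightly cleaner rendering of the paper's Step~1.
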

\begin{proof}
For every $ 1\leq j\leq k $ and $ u\in H_* $, we define
\[
B_j (u) = \frac{b_j (u)}{\sum_{j = 1} \sp k b_j (u)}.
\]
We divide the proof in three steps.

\noindent\textsl{Step 1}. We show that if $ k\geq 2 $ and
$ \eta $ is small enough, there exists $ \delta_0\in (0,1) $ such that 
\begin{equation}
\label{eq:Bj}
B_j (u)\in (\delta_0,1 - \delta_0).
\end{equation}
It is useful to define $ \alpha_* := \min\{\alpha_j\,|\,1\leq j\leq k\} $.
Due to \eqref{eq:A4} we have $ \alpha < \alpha_* $.
By property (ii) of $ \Lambda $
\begin{equation}
\label{eq:Bj-1}
\sqrt{2\alpha} + \eta > \Lambda(u,\om)\geq \xi(u);
\end{equation}
we fix $ 1\leq j\leq k $. We have
\[
\begin{split}
\xi(u)\sp 2 &= \frac{\no{\D u}_{L\sp 2} \sp 2 + 2\nint \F(u)}%
{\sum_{j = 1} \sp k b_j (u)} \\
&= \frac{\no{\D u}_{L\sp 2} \sp 2 + 2\nint \F(u)}%
{\sum_{j\neq s} b_j (u)}\cdot\frac{1}{1 + B_j (u)}
\geq\frac{2\alpha_j}{1 + B_j (u)}
\end{split}
\]
where in the last inequality we neglected the gradient terms and used the 
notation of the assumption \eqref{eq:A4}.
From \eqref{eq:Bj-1} and the inequality above, we obtain
\[
\sqrt{2\alpha} + \eta > \frac{\sqrt{2\alpha_j}}{\sqrt{1 + B_j (u)}}
\]
whence
\begin{equation}
\label{eq:Bj-3}
B_j (u) > \frac{2\alpha_j}{(\sqrt{2\alpha} + \eta)\sp 2} - 1 \geq
\frac{2\alpha_*}{(\sqrt{2\alpha} + \eta)\sp 2} - 1 =:  \delta_0.
\end{equation}
Thus, if $ \delta_0 > 0 $, the obtain a bound from below for $ B_j (u) $.
Thus, we require
\begin{equation}
\label{eq:Bj-2}
\eta < \sqrt{2\alpha_*} - \sqrt{2\alpha}
\end{equation}
which gives $ B_j (u) > \delta_0 $ for every $ 1\leq j\leq k $. 
Because 
\[
\sum_{j = 1} \sp k B_j (u) = 1
\]
it follows that
\[
B_j (u) = 1 - \sum_{h\neq j} B_h (u)\leq 1 - (k - 1)\delta_0\leq 1 - \delta_0.
\]
\textsl{Step 2.}
If $ \Lambda(u,\om) < \sqrt{2\alpha} + \eta $, then $ \om $
is bounded from above. If $ \eta $ is chosen as in \eqref{eq:Bj-2}
and $ k\geq 2 $ then 
\[
\Lambda(u,\om)\geq\frac{\sum_{j = 1} \sp k B_j \om_j\sp 2}%
{2\sum_{j = 1} \sp k B_j \om_j}\geq \frac{\delta_0}{2(1 - \delta_0)}\cdot
\frac{\sum_j \om_j \sp 2}{\sum_j \om_j}.
\]
Thus,
\[
\sum_{j = 1}\sp k\om_j \sp 2\leq 2 C_0
\cdot\sum_{j = 1} \sp k \om_j
\]
where
\[
C_0 := \frac{(\sqrt{2\alpha} + \eta)(1 - \delta_0)}{\delta_0}
\]
Thus,
\begin{equation}
\label{eq:om}
\om_j < C_0 (1 + \sqrt{k}),\quad 1\leq j\leq k.
\end{equation}
When $ k = 1 $,
\[
\sqrt{2\alpha} + \eta > \Lambda(u,\om)\geq\om/2 
\]
thus,
\begin{equation}
\label{eq:om-1}
\om < 2(\sqrt{2\alpha} + \eta).
\end{equation}
\textsl{Step 3.}
We conclude the proof of the lemma. When $ k\geq 2 $,
\[
\begin{split}
\eta \geq &\Lambda(u,\om) - \Lambda(u,\om(u)) = \Lambda(u,\om) - \xi(u) \\
=& \frac{1}{2}
\left(\frac{\xi\sp 2 + \sum_{j = 1} \sp k B_j \om_j \sp 2 - 2
\sum_{j = 1}\sp k B_j \om_j \xi}{\sum_{j = 1}\sp k B_j \om_j}\right)\\
=& \frac{1}{2}\frac{\sum_{j = 1} \sp k B_j (\om_j - \xi)\sp 2}%
{\sum_{j = 1}\sp k B_j \om_j} 
= \frac{1}{2}
\sum_{j = 1} \sp k \left(\frac{B_j}{\sum_{j = 1}\sp k B_j \om_j}\right)\cdot
(\om_j - \xi)\sp 2\\
\geq&\frac{\delta_0}{2(1 - \delta_0) C_0 (\sqrt{k} + 1)}
\sum_{j = 1} \sp k (\om_j - \xi)\sp 2
\end{split}
\]
the last inequality follows from the bounds on 
$ \om $ \eqref{eq:om} and on $ B_j $ from \textsl{Step 1}
and \textsl{Step 2}. Thus,
\[
\frac{2\eta (1 - \delta_0)\sp 2 (\sqrt{k} + 1)(\sqrt{2\alpha} + \eta)}%
{\delta_0 \sp 2} > 
(\om_j - \xi)\sp 2.
\]
Because $ \xi < \sqrt{2\alpha} + \eta $, 
\begin{equation}
\label{eq:Bj-4}
|\om_j - \sqrt{2\alpha}| < \sqrt{\eta} \left(\sqrt{\eta} + 
\frac{1 - \delta_0}{\delta_0}\cdot
\left(2(\sqrt{2\alpha} + \eta)(\sqrt{k} + 1)\right)\sp{1/2}\right)
\end{equation}
for every $ 1\leq j\leq k $.
Because the term on the right member of the inequality above is
$ O(\sqrt{\eta}) $, the proof is complete when $ k\geq 2 $.
When $ k = 1 $, by \eqref{eq:om-1}
\[
\eta > \Lambda(u,\om) - \xi(u) = \frac{1}{2\om} (\om - \xi)\sp 2\geq
\frac{1}{4(\sqrt{2\alpha} + \eta)} (\om - \xi)\sp 2
\]
then
\[
|\om - \xi| < 2\left(\eta(\sqrt{2\alpha} + \eta)\right)\sp{1/2}
\]
whence
\begin{equation}
\label{eq:Bj-5}
|\om - \sqrt{2\alpha}| < \sqrt{\eta}\left(\sqrt{\eta} + 
2\left(\sqrt{2\alpha} + \eta\right)\sp{1/2} \right)
\end{equation}
\end{proof}
\def\proofname{Proof of the Theorem \hyperlink{thm:main}{Main}}
%\begin{proof}
\begin{proof}
Let $ (u',\om') $ be such that 
\[
\Lambda(u',\om') < \sqrt{2\alpha} + \eta
\]
where $ \eta $ is chosen in such a way that the right term in \eqref{eq:Bj-4}
(for $ k\geq 2 $) or \eqref{eq:Bj-5} (when $ k = 1 $)
is not greater than
\[
\frac{1}{2}(m - \sqrt{2\alpha}).
\]
We define 
\[
\sg_j := \om' _j \nint (u_j ')\sp 2.
\]
Clearly $ (u',\om')\in M_\sg\sp r  $. Now, let us take a minimising sequence
$ (u_n,\om_n) $ of $ E $ over $ M_\sg\sp r  $. By the
Ekeland's theorem \cite[Theorem~5.1,p.\,48]{Str90}, we can suppose
that $ (u_n,\om_n) $ is a Palais-Smale sequence.
Then, there exists $ n_0\in\mb{N} $ such that 
\[
\Lambda(u_n,\om_n) \leq \Lambda(u',\om') = 
\Lambda(u',\om') < \sqrt{2\alpha} + \eta.
\]
if $ n\geq n_0 $. Thus
\[
\Lambda(u_n,\om_n) < \sqrt{2\alpha} + \eta,\quad n\geq n_0.
\]
By the preceding lemma, we have 
\[
|\om_n \sp j - \sqrt{2\alpha}| < \frac{1}{2}(m - \sqrt{2\alpha});
\]
up extract a subsequence from $ (\om_n \sp j) $, we can suppose
that each of the $ (\om_n \sp j) $ converge to some $ \om_j $.
Therefore
\[
m - \om_j  = m - \sqrt{2\alpha} + \sqrt{2\alpha} - \om_j \geq
\frac{1}{2}(m - \sqrt{2\alpha}) > 0.
\]
By Proposition~\ref{prop:palais-smale}, we obtain that $ E $ achieves
its infimum on $ M_\sigma $. Finally, we observe that the subset of
$ (0,+\infty)\sp k $ 
\[
\Omega := 
\bigg\{%
%\begin{array}{c}
\sigma\in (0,+\infty)\sp k\,|\,
\frac{I(\sigma)}{\sum_{j = 1} \sp k \sg_j} < \sqrt{2\alpha} + \eta
%\end{array}
\bigg\}
\]
is open. In fact, let $ \sg_0\in\Omega $ and $ (u_0,\om_0) $ be 
a minimiser of $ E $ over $ M_{\sg_0} $. Thus,
\[
\Lambda(u_0,\om_0) < \sqrt{2\alpha} + \eta.
\]
Given an arbitrary $ \sg $, we define
\[
\om_\sg\sp j := \frac{\om_0 \sp j \sg_j}{\sg_0 \sp j}.
\]
Using the continuity of $ \Lambda $ on $ \om $, it can be showed that
\[
\Lambda(u_0,\om_\sg) = \Lambda(u_0,\om_0) + O(|\sg - \sg_0|).
\]
Thus, if $ |\sg - \sg_0| $ is small enough,
\[
\Lambda(u,\om_\sg) < \sqrt{2\alpha} + \eta
\]
which concludes the proof.
\end{proof}
\def\proofname{Proof}
\begin{cor*}
There exists $ \eta_0 $ such that, for every $ \eta < \eta_0 $ there
exists $ (u_\eta,\om_\eta) $ such that $ u_\eta $ is a solution to 
\eqref{eq:E}
\begin{gather*}
\Lambda(u_\eta,\om_\eta) < \sqrt{2\alpha} + \eta,\quad
\om_\eta \sp j - \sqrt{2\alpha} < \eta
\end{gather*}
for every $ 1\leq j\leq k $.
\end{cor*}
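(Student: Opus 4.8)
The corollary essentially repackages the construction carried out in the proof of the \hyperlink{thm:main}{Main theorem}, so the plan is to extract from that argument the three asserted properties, taking care to match the orders of smallness. The delicate point is that Lemma~\hyperlink{lem:coercive}{Coercive} controls $|\om\sp j - \sqrt{2\alpha}|$ only by a quantity of order $\sqrt{\eta}$ (the right-hand sides of \eqref{eq:Bj-4} and \eqref{eq:Bj-5}), whereas here we ask for the stronger bound $\om_\eta\sp j - \sqrt{2\alpha} < \eta$. The remedy is to feed the earlier results an auxiliary parameter $\eta'$, chosen so small that the $O(\sqrt{\eta'})$ expression on the right of \eqref{eq:Bj-4} or \eqref{eq:Bj-5} is strictly below the target $\eta$, while still keeping $\eta' < \eta$; the energy/charge bound $\Lambda < \sqrt{2\alpha} + \eta'$ is then inherited as $\Lambda < \sqrt{2\alpha} + \eta$. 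I would take $\eta_0$ to be a threshold below which such an $\eta'$ exists and the hypotheses used in the proof of the Main theorem hold (in particular $\eta' < \sqrt{2\alpha_*} - \sqrt{2\alpha}$, cf.\ \eqref{eq:Bj-2}, so that Step~1 of Lemma~\hyperlink{lem:coercive}{Coercive} applies).

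For this $\eta'$, the proof of the Main theorem produces $\sg \in \Omega$ and a minimiser $(u_\eta,\om_\eta)$ of $E$ on $\M_\sg\sp r$ with $\Lambda(u_\eta,\om_\eta) < \sqrt{2\alpha} + \eta'$. It remains to verify that $u_\eta$ solves \eqref{eq:E}. Since $(u_\eta,\om_\eta)$ minimises $E$ subject to the constraints $C_j = \sg_j$, the Lagrange multiplier rule yields multipliers $\lambda_j$ with $\D E(u_\eta,\om_\eta) = \sum_{j = 1}\sp k \lambda_j \D C_j(u_\eta,\om_\eta)$. Testing against variations in the $\om$-variable gives $\lambda_j = \om_\eta\sp j$ (using $b_j(u_\eta) \neq 0$, which holds because $\sg_j > 0$), and testing against variations in $u_j$ then produces exactly $-\Delta u_j + (m_j\sp 2 - (\om_\eta\sp j)\sp 2)u_j + \p_{z_j}\G(u_\eta) = 0$, i.e.\ \eqref{eq:E}; since the problem is $O(\N)$-invariant, a radial critical point is a genuine critical point by symmetric criticality. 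The positivity $u_j > 0$ follows by replacing each component with its absolute value, which by \eqref{eq:A0} changes neither $E$ nor the constraints, so that $|u_\eta|$ is again a minimiser; elliptic regularity and the strong maximum principle then give strict positivity, while radial symmetry is built into $\M_\sg\sp r$.

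Finally, applying Lemma~\hyperlink{lem:coercive}{Coercive} to $(u_\eta,\om_\eta)$ with the parameter $\eta'$ gives $\om_\eta\sp j - \sqrt{2\alpha} < \eta$ for every $1\leq j\leq k$, while $\Lambda(u_\eta,\om_\eta) < \sqrt{2\alpha} + \eta' < \sqrt{2\alpha} + \eta$ by the choice of $\eta'$. The step I expect to require the most care is the first one: one must check that the factor multiplying $\sqrt{\eta'}$ in \eqref{eq:Bj-4} stays bounded as $\eta' \to 0$ and does not depend on $u$. This is precisely the uniform control supplied by Steps~1 and~2 of Lemma~\hyperlink{lem:coercive}{Coercive}, where $\delta_0$ and $C_0$ are functions of $\alpha$, $\alpha_*$, $k$ and $\eta'$ alone, and where $\delta_0 \to \alpha_*/\alpha - 1 > 0$ as $\eta' \to 0$. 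Granting this uniformity, the reparametrisation $\eta' = \eta'(\eta)$ closes the argument.
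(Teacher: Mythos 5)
Your proposal is correct and follows essentially the same route as the paper: existence from the Main theorem, the Lagrange multiplier computation together with the symmetric criticality principle to obtain the Euler--Lagrange system, replacement of $u$ by $(|u_1|,\dots,|u_k|)$ (licensed by \eqref{eq:A0}) and the maximum principle for strict positivity. Your explicit reparametrisation $\eta' = \eta'(\eta)$, turning the $O(\sqrt{\eta'})$ bound of Lemma~\hyperlink{lem:coercive}{Coercive} into the stated estimate $\om_\eta\sp j - \sqrt{2\alpha} < \eta$, is a detail the paper leaves implicit, but it is the right fix and does not alter the approach.
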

\begin{proof}
The existence of $ (u_\eta,\om_\eta) $ follow from Theorem~\hyperlink{thm:main}{Main}. All we need to prove is that $ u_\eta > 0 $ and solves the
elliptic system in \eqref{eq:E}.
So, let $ \sg\in (0,\infty)\sp k $ be as in 
Theorem~\hyperlink{thm:main}{Main} and $ (u,\om)\in M_\sg\sp r  $ a
minimiser of $ E $ over $ M_\sg\sp r  $. From \eqref{eq:A0}, 
\[
(v,\om) := (|u_1|,\dots,|u_k|,\om)
\]
is also a minimiser of $ E $ over $ M_\sg\sp r $ and, thus, a constrained 
critical point. 
There is a natural action
of the orthogonal group $ O(\N,\R) $ on $ H\sp 1 (\RN,\R\sp k) $
\begin{gather*}
O(\N)\times H\sp 1 (\R\sp\N,\R\sp k)\times [0,+\infty)\sp k 
\ra H\sp 1 (\R\sp\N,\R\sp k)\times [0,+\infty)\sp k\\
(G,u,\om)\mapsto G\cdot (u,\om) := (u(Gx),\om)
\end{gather*}
this action restricts to $ M_\sg $ and the set of fixed point is
$ M_\sg\sp r  $. Moreover, $ E $ is invariant for the action
\[
E(u,\om) = E(u(Gx),\om).
\]
By the symmetric criticality principle \cite[\S 0]{Pal79}, $ (u,\om) $
is a critical point of $ E $ over $ M_\sg $. Thus, each 
of the equations in \eqref{eq:E} can be written as
\begin{equation}
\label{eq:E-2}
-\Delta v_j + c_j (x) v_j = 0
\end{equation}
where
\begin{equation}
\label{eq:maxp}
c_j (x) = 
\begin{cases}
m_j\sp 2 - \om_j\sp 2 + \frac{\pt_{z_j} \G(v)}{v_j} 
&\text{ if } v_j (x)\neq 0\\
m_j\sp 2 - \om_j\sp 2 &\text{ if } v_j (x) = 0.
\end{cases}
\end{equation}
From \eqref{eq:A2}
\begin{equation}
\label{eq:maxp-1}
|c_j (x)|\leq m_j\sp 2 - \om_j\sp 2 + 
c\,(|v_j|\sp{p - 2} + |v_j|\sp{q - 2}).
\end{equation}
Thus, for every bounded domain $ V\subset \RN $, $ c_j\in L\sp\infty (V) $,
because $ v_j $ is continuous. Then, we can apply the maximum
principle to the elliptic equation \eqref{eq:E-2} (for example,
\cite[Lemma~1,p.\,556]{Eva10}) and conclude that $ v_j > 0 $ on $ V $.
Because this holds for every $ V $, $ v_j > 0 $ on $ \RN $. Hence
$ u_\eta $ has a sign for every $ \eta $. Up to adjusting the signs
of $ u_\eta\sp j $, $ (u,\om) $ is the sought solution to \eqref{eq:E}.
\end{proof}
\noindent Some remarks are in order.
\subsubsection*{Concentration of minimising sequences}
If we add the requirement
\[
\label{eq:A5}
\tag{$ A_5 $}
\nint \F(u_1 \sp *,\dots,u_k \sp *)\leq \nint \F(u)
\]
where $ u_j \sp * $ denotes the decreasing rearrangment of $ u_j $,
then minimisers of $ E $ over 
$ M_\sg \sp r $ are minimisers of $ E $ over $ M_\sg $. We define
\[
I(\sg) := \inf_{M_\sg} E.
\]
Moreover, if for every minimiser $ (u,\om) $ there holds
\[
\label{eq:A6}
\tag{$ A_6 $}
\varlimsup
E(u_1 (\cdot + y_n\sp 1),u_2 (\cdot + y_n \sp 2),\dots,
u_k (\cdot + y_n \sp k),\om) > E(u,\om)
\]
if $ |y_n \sp j - y_n \sp h| $ is not bounded for some $ j\neq h $,
then it is natural to expect the \textsl{sub-additivity} property
of $ I $, that is
\[
I(\sg) < I(\sg') + I(\sg - \sg')
\]
for every $ \sg' $ such that $ \sg'\neq\sg $ and $ \sg'_j\leq\sg $
for every $ 1\leq j\leq k $. Thus, by means of the concentration-compactness 
Lemma, it would follow that a minimising sequence exhibits a
concentration behaviour.
\subsubsection*{Some example of non-linearity}
It might be surprising the fact that in our solutions
all the frequencies tend to converge in the interval $ (\sqrt{2\alpha},m) $
regardless of the relations between $ m_j $ and $ m_h $ for $ j\neq h $.
This follows from the assumption \eqref{eq:A4}: when the non-linearity
$ \G $ does not have coupling terms, that is
\begin{equation}
\label{eq:uncoupled}
\G(\z) = \G_1 (\z_1) + \dots + \G_k (\z_k)
\end{equation}
the system \eqref{eq:E} reduces to $ k $ scalar elliptic equations
\[
-\Delta u_j + (m_j\sp 2 - \om_j\sp 2) u_j + \G_j '(u_j) = 0
\]
each of them can be solved separately as in \cite{BF09} or \cite{BBBM10}
in order to obtain positive solutions. By the Derrick-Pohozaev identity and 
the maximum principle it follows
\[
m_j > \om_j > \sqrt{2\alpha_j},\quad 1\leq j\leq k.
\]
So, if $ \G $ is as in \eqref{eq:uncoupled}, the frequencies $ \om_j $
have a different behaviour from the one proved in 
Theorem~\hyperlink{thm:main}{Main}, where
\[
\sqrt{2\alpha} < \om_j < m \leq m_j,\quad 1\leq j\leq k.
\]
In fact, a non-linearity as in \eqref{eq:uncoupled} does not satisfy the 
assumption \eqref{eq:A4}: given $ \z\neq (0,\dots,0) $, we have
\[
\frac{\F(\z)}{|\z|\sp 2}\geq\frac{\sum_{j = 1}\sp k \alpha_j |\z_j|\sp 2}%
{|\z|\sp 2}\geq\min_{1\leq j\leq k} \alpha_j.
\]
Also, it is more simple to treat each equation of 
the case \eqref{eq:uncoupled} separately, using the result of \cite{BBBM10}
or the theorem when $ k = 1 $.

Some non-linearities $ \G $ satisfying assumptions (\ref{eq:A1}--\ref{eq:A4})
are given by
\begin{gather*}
\G(\z) = - \z_1 \sp p \z_2 \sp p + |\z|\sp q,\quad \z\in\Sigma\\
\G(\z) := \G(|\z_1|,|\z_2|)
\end{gather*}
when $ k = 2,N = 3 $ and 
\[
1 < p,\ 2p < q < 5.
\]
When $ k = 3,N = 3 $, we can define
\begin{gather*}
\G(\z) = - (\z_1 \z_2)\sp{p_1} - (\z_2 \z_3)\sp{p_2}
- (\z_1 \z_3)\sp{p_3} 
- (\z_1 \z_2 \z_3)\sp{p_4} + |\z|\sp q,\quad \z\in\Sigma\\
\G(\z) := \G(|\z_1|,|\z_2|,|\z_3|).
\end{gather*}
and
\begin{gather*}
2 < 2p_i < q < 5,\text{ for } 1\leq i\leq 3\\
3 < 3p_4 < q.
\end{gather*}
\subsection*{Acknowledgments.}
To professor Vieri Benci and professor Jaeyoung Byeon for their helpful 
suggestions, as well as professors Pietro Majer, Claudio Bonanno and 
Jacopo Bellazzini. 
\nocite{Mus91}
%\bibliographystyle{siam}
%\bibliography{solitons}
%\end{document}
\def\cprime{$'$} \def\cprime{$'$} \def\cprime{$'$} \def\cprime{$'$}
  \def\cprime{$'$} \def\cprime{$'$} \def\cprime{$'$} \def\cprime{$'$}
  \def\cprime{$'$} \def\polhk#1{\setbox0=\hbox{#1}{\ooalign{\hidewidth
  \lower1.5ex\hbox{`}\hidewidth\crcr\unhbox0}}}

\end{document}